\newtheorem{theorem}{\bf Theorem}[section]
\newtheorem{definition}[theorem]{\bf Definition}
\newtheorem{lemma}[theorem]{\bf Lemma}
\newtheorem{corollary}[theorem]{\bf Corollary}
\newtheorem{conjecture}[theorem]{\bf Conjecture}
\newcommand{\N}{\mathbb{N}}
\newcommand{\p}{\mathbb{P}}
\newcommand{\F}{\mathbb{F}}
\newcommand{\con}{\mathrm{con}}
\newcommand{\dom}{\mathrm{dom}}
\title{An application of Bertini Theorem}
\author[M. Makhul]{
	Mehdi Makhul $^{\ast, \circ}$}
\address[Mehdi Makhul]{Johann Radon Institute for Computational and Applied 
	Mathematics (RICAM), Austrian Academy of Sciences}
\author[J. Schicho]{
	Josef Schicho$^{\ast}$}
\address[Josef Schicho]{Research Institute for Symbolic Computation (RISC), 
	Johannes Kepler University, Linz}
\thanks{Mehdi Makhul $^{\ast, \circ}$ was supported by the Austrian ScienceFund  FWF Project P 30405-N32 and Josef Schich $^\ast$ Supported by the Austrian Science Fund (FWF): W1214-N15, 
	Project DK9} 
\email{\{mmakhul,jschicho\}@risc.jku.at}
\keywords{Bertini Theorem, Finite Fields}
\begin{document}
\begin{abstract}
Given an irreducible variety $X$ over a finite field, the density of hypersurfaces of varying degree $d$ intersecting $X$ in an irreducible subvariety is $1$, by a result of Charles and Poonen. In this note, we analyse the situation fixing $d=1$ and extend the base field instead of the degree $d$. We compute the probability that a random linear subspace of the right dimension intersects $X$ in a given number of points. 
\end{abstract}
	
\maketitle
\section{Introduction}

Classical Bertini theorems over an infinite field $K$ assert that if a subscheme $X \subset \p^n(K)$ has a certain property (smooth, geometrically irreducible), then for almost all hyperplanes $\Gamma$, the intersection $X \cap \Gamma$ has this property too. 

In this paper, we consider an algebraic variety~$X\subset \p^n$ of degree~$d$ and dimension $m$ over a finite field~$\F_q$ with $q$ 
elements, where $q$ is a prime power. Given such a variety, we compute the probability for which a codimension $m$ linear subspace  in~$\p^n$ intersects the variety in exactly $k$ points. Notice that here we consider the mere set-theoretic intersection: no multiplicities are taken into account. We can then consider the same kind of probability, keeping the same variety $X$, but changing the base field from~$\F_q$ to~$\F_{q^2}$, $\F_{q^3}$ and so on. In this way, for every $N \in \N$ we define the number~$p_k^N(X)$, namely the probability for a codimension $m$ linear subspace  in~$\p^n$ defined over $\F_{q^N}$ to intersect~$X$ in exactly~$k$ points. If the limit as~$N$ goes to infinity of the sequence $\bigl( p_k^N(X) \bigr)_{N \in \N}$ exists, we 
denote this number by~$p_k(X)$. We will compute the exact values $p_k(X)$ for each $k=0,1,\dots d$, provided that $X$ satisfies some geometrical properties.


The main result of this paper is the following theorem
%
%
%
%

\begin{theorem}
	\label{main}
	Let $X$ be a geometrically irreducible variety of dimension $m$ and degree $d$ in the projective space $\p^n$. Suppose that $X$ has simple tangency property, then for every $k\in \{0,\dots d\}$ we have
	\[
	p_k(X) = \sum_{s=k}^d \frac{(-1)^{k+s}}{s!} \binom{s}{k}.
	\]
\end{theorem}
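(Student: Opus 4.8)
The plan is to translate the counting problem into an equidistribution statement for Frobenius in a family of zero‑dimensional schemes, and then to evaluate the resulting group‑theoretic average.

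\emph{Step 1: the universal linear section.} First I would let $G$ be the Grassmannian of codimension‑$m$ linear subspaces of $\p^n$, a smooth projective variety over $\F_q$, and form the incidence variety
\[
I=\{(x,\Gamma)\in X\times G \;:\; x\in\Gamma\},
\]
with its two projections $p\colon I\to X$ and $\pi\colon I\to G$. Since $p$ exhibits $I$ as a Grassmannian bundle over $X$ (the fiber over a point being the Grassmannian of codimension‑$m$ subspaces through that point) and $X$ is geometrically irreducible, $I$ is geometrically irreducible of dimension $\dim G$. Bézout's theorem together with the part of the simple tangency hypothesis saying that a general $\Gamma$ meets $X$ transversally in $d$ reduced points gives that $\pi$ is dominant and generically finite of degree $d$, with branch locus $B\subsetneq G$ of codimension at least $1$.

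\emph{Step 2: the monodromy group is $S_d$.} Next I would let $M\le S_d$ be the geometric monodromy group of $\pi$, i.e.\ the Galois group of the Galois closure of $\overline{\F_q}(I)/\overline{\F_q}(G)$ acting on the $d$ geometric points of a general fiber. Geometric irreducibility of $I$ makes $M$ transitive. The simple tangency hypothesis is precisely what forces the generic point of each component of $B$ to be a $\Gamma$ whose intersection with $X$ consists of $d-1$ points, two of which have collided with multiplicity $2$; hence the local monodromy around each component of $B$ is a transposition, and $M$ is generated by transpositions. A transitive subgroup of $S_d$ generated by transpositions is all of $S_d$ (the graph on $\{1,\dots,d\}$ whose edges are those transpositions is $M$‑invariant, transitivity makes it connected, and edge‑transpositions of a connected graph generate $S_d$), so $M=S_d$. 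This identification is the heart of the argument and the step I expect to require the most care: one must make sure the monodromy and local‑monodromy analysis is valid in the given characteristic, the role of the simple tangency hypothesis being to keep the branching tame and simple so that loops around $B$ generate $M$ and map to transpositions.

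\emph{Step 3: Chebotarev/Lang--Weil and the conclusion.} Finally, for $N$ large and $\Gamma\in G(\F_{q^N})\setminus B(\F_{q^N})$, the set $X\cap\Gamma$ consists of $d$ distinct geometric points permuted by the $q^N$‑power Frobenius, and $\#(X\cap\Gamma)(\F_{q^N})$ equals the number of fixed points of the associated permutation in $S_d$. Since $\#B(\F_{q^N})=O(q^{N(\dim G-1)})$ while $\#G(\F_{q^N})\sim q^{N\dim G}$, the locus $B$ is negligible in the limit. The equidistribution theorem for Frobenius conjugacy classes in a family with geometric monodromy group $S_d$ (equivalently, Lang--Weil applied to the fibered powers $I\times_G\cdots\times_G I$) then gives, for every $k\in\{0,\dots,d\}$,
\[
p_k(X)=\lim_{N\to\infty}\frac{\#\{\Gamma\in G(\F_{q^N}) : \#(X\cap\Gamma)(\F_{q^N})=k\}}{\#G(\F_{q^N})}=\frac{1}{d!}\,\#\{\sigma\in S_d : \sigma\text{ has exactly }k\text{ fixed points}\}.
\]
The number of $\sigma\in S_d$ with exactly $k$ fixed points is $\binom{d}{k}D_{d-k}$, where $D_{d-k}=(d-k)!\sum_{i=0}^{d-k}(-1)^i/i!$ counts the derangements of $d-k$ letters; dividing by $d!$ gives $\frac{1}{k!}\sum_{i=0}^{d-k}(-1)^i/i!$, and the substitution $s=k+i$ rewrites this as $\sum_{s=k}^d\frac{(-1)^{k+s}}{s!}\binom{s}{k}$, which is the asserted value.
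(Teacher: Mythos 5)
Your route is genuinely different from the paper's. The paper never computes a monodromy group in higher dimension: it slices $X$ by a generic $W\in J_{m-1}$, uses its Lemmas 2.1 and 2.4 to show that for all but a density-zero set of $W$ the section $X\cap W$ is a geometrically irreducible curve with simple tangency, and then reduces the count over pairs $V\subset W$ to the curve case, which is quoted from \cite[Proposition 5.2]{Makhul2018}. You instead run the monodromy--Chebotarev argument directly on the incidence variety $I\subset X\times J_m$; this is self-contained (it reproves the curve case rather than citing it) and avoids the paper's somewhat delicate double-counting over the flag variety $I=\{(V,W):V\subset W\}$. Steps 1 and 3 are sound, and your final combinatorial identity checks out.

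There is, however, a genuine gap in Step 2, exactly where you anticipated trouble. The simple tangency hypothesis produces \emph{one} distinguished $V\in J_m$ (a hyperplane of some $\Gamma\in J_{m-1}$) meeting $X$ in $d-1$ points with a single multiplicity-$2$ contact; it says nothing about the generic point of \emph{every} component of the branch locus $B$. If $X$ is singular, the codimension-one locus of $V$'s through a singular point is typically a component of $B$ whose local monodromy is not a transposition (for an $E_6$-type curve singularity it is a $3$-cycle, for a node it is trivial), and higher-order tangencies can contribute further components. So the assertion that $M$ is generated by transpositions is unjustified and in general false; transitivity plus \emph{containing} one transposition is also not enough to force $M=S_d$ (consider $D_4\le S_4$). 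The standard repair is to upgrade transitivity to $2$-transitivity: the variety $\{(x,y,V): x\ne y,\ x,y\in X\cap V\}$ fibers over the irreducible $(X\times X)\setminus\Delta$ with irreducible fibers of constant dimension (codimension-$m$ subspaces containing the line $\overline{xy}$), hence $I\times_{J_m}I\setminus\Delta$ is irreducible and $M$ is $2$-transitive; a $2$-transitive subgroup of $S_d$ containing a transposition is $S_d$. You still need the single transposition, which does come from your distinguished $V$: the degree-$2$ local factor of the separable extension at the corresponding codimension-one point of $B$ is Galois with inertia $\Z/2$ acting as a transposition. With Step 2 amended this way, your proof goes through.
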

This is a generalization of \cite[Theorem $1.2$]{Makhul2018}, which is the statement for the case when $X$ is a planar curve. The notion of simple tangency property will be define in Section \ref{subsec:simple-tangency}.

It has been shown that if $K$ is a finite field, then the Bertini Theorem about irreducibility can fail, see~\cite[Theorem $1.10$]{Charles2016}. In \cite{Charles2016}, the authors considered the density of hypersurfaces (of sufficiently high degree) whose intersection with a given geometrically irreducible variety is also geometrically irreducible. More precisely: Let $S_d\subset\F_q[x_0,\dots,x_n]$ be the set of homogeneous polynomials of degree $d$. For $f\in S_d$, let $Z(f)$ be the set of vanishing points in $\p^n$ of $f$. Then

\begin{theorem}[Charles--Poonen]
	\label{poonen1}
For a geometrically irreducible variety\\ $X\subset\p^n$ of dimension at least $2$ we have
\[
\lim_{d\to\infty}\frac{|\{f\in S_d : Z(f)\cap X \text{ is geometrically irreducible}\}|}{|S_d|}=1.
\] 

\end{theorem}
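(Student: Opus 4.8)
The plan is to detect geometric irreducibility of the section $Y := Z(f)\cap X$ by splitting it into two independent tasks: geometric \emph{connectedness}, which I expect to hold for \emph{every} $f$ of large degree, and the absence of a second geometric \emph{component}, which is where the density must do its work. Throughout I may base change to $\overline{\F_q}$, since geometric irreducibility of $Y$ is by definition irreducibility of $Y_{\overline{\F_q}}=Y\times_{\F_q}\overline{\F_q}$. The conceptual point is that the classical Bertini irreducibility theorem (valid over any infinite field) produces an irreducible section only for a \emph{generic} hyperplane, and over $\F_q$ there are only finitely many hypersurfaces of each degree, so ``generic'' must be replaced by a quantitative density estimate.

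First I would dispose of connectedness. Discarding the vanishing-density set of $f$ with $X\subseteq Z(f)$, the restriction $f|_X$ is a nonzerodivisor and we have on $X_{\overline{\F_q}}$ the short exact sequence
\[
0 \longrightarrow \mathcal{O}_X(-d) \longrightarrow \mathcal{O}_X \longrightarrow \mathcal{O}_{Y} \longrightarrow 0 .
\]
Taking cohomology gives $H^0(X,\mathcal{O}_X)\to H^0(Y,\mathcal{O}_{Y})\to H^1(X,\mathcal{O}_X(-d))$. Because $X$ is geometrically irreducible and projective, $H^0(X,\mathcal{O}_X)=\overline{\F_q}$; and because $\dim X=m\ge 2$, Serre duality together with Serre vanishing for the resulting positive twist forces $H^1(X,\mathcal{O}_X(-d))=0$ once $d$ exceeds a bound depending only on $X$. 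Hence $H^0(Y,\mathcal{O}_{Y})=\overline{\F_q}$, so $Y_{\overline{\F_q}}$ is connected for all sufficiently large $d$. Connectedness therefore contributes nothing to the failure probability, and the problem collapses to showing that a connected section is typically irreducible.

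It remains to show that the density of $f$ for which the connected scheme $Y$ is geometrically \emph{reducible} tends to $0$. Here I would first invoke Poonen's Bertini smoothness theorem on the smooth locus $X^{\mathrm{sm}}$: for a set of $f$ of density $1$, the intersection $Z(f)\cap X^{\mathrm{sm}}$ is smooth of pure dimension $m-1$, and a smooth scheme has pairwise disjoint irreducible components. Since $Y$ is connected, two distinct components could meet only inside the non-smooth locus $X\setminus X^{\mathrm{sm}}$, which has dimension at most $m-1$. The genuinely hard task is then to bound, uniformly in $d$, the density of $f$ for which $Y$ acquires a second $(m-1)$-dimensional component crossing the others along this lower-dimensional locus.

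For this last bound I would run a closed-point sieve in the style of Poonen, partitioning the closed points of $X$ into low, medium, and high degree. At low-degree points the splitting condition cuts out an explicit, $d$-independent set whose complement has positive density; the medium- and high-degree contributions are estimated by bounding, via an effective Lang--Weil count, the number of $\F_{q^e}$-points that a hypothetical extra component of $Y$ would have to carry, and by showing that the $f$ supporting such a component lie in a subvariety of $S_d$ whose codimension grows with $d$. The main obstacle is precisely to make this estimate uniform as $d\to\infty$ and to control the interaction with the singular locus of $X$; this is the technical heart of the Charles--Poonen argument and the step that has no counterpart in the infinite-field Bertini theorem.
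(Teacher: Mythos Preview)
The paper does not prove this statement at all: Theorem~\ref{poonen1} is quoted in the introduction as a known result of Charles and Poonen, with a citation to \cite{Charles2016}, purely as motivation and context for the authors' own question about linear sections over growing extension fields. There is therefore no ``paper's own proof'' to compare your proposal against.

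As an independent matter, your outline is broadly the shape of the actual Charles--Poonen argument, but two points deserve care. First, the connectedness step: the vanishing $H^1(X,\mathcal{O}_X(-d))=0$ you extract from Serre duality plus Serre vanishing is only immediate when $X$ is Cohen--Macaulay; for an arbitrary geometrically irreducible subscheme this $H^1$ need not vanish, so either an additional hypothesis or a more robust argument (reduction to the normalization, or a Fulton--Hansen/Grothendieck connectedness input) is required. Second, the reducibility step in \cite{Charles2016} does not in fact pass through Poonen's smoothness Bertini on $X^{\mathrm{sm}}$ in the way you sketch; their key device is a direct bound on the locus of $f$ producing a geometrically reducible section, obtained by controlling how many $f$ can cut out a divisor admitting a decomposition into effective pieces of lower degree, together with a uniform Lang--Weil-type count. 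Your final paragraph correctly identifies that this uniform-in-$d$ estimate is the crux, but the route via ``smooth on the smooth locus, hence components meet only in $X_{\mathrm{sing}}$'' does not by itself close the gap and is not the path taken in the reference.
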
  
Recently, an analogue of this problem for plane curves was investigated in \cite{Asgarli2019}.



Throughout this paper, when we write $J_m=G(n-m,n)$ we mean the variety of all linear subspaces of codimension $m$ in the projective space $\p^n$, the so-called~\emph{Grassmannian}. A \emph{Chow variety} is a variety whose points correspond to all cycles of a given projective space of given dimension and degree.


\section{Proof of the Main result}
\label{sec:main-result}

Throughout, unless otherwise stated, we let $X \subset \p^n$ be a geometrically irreducible algebraic variety defined over $\F_q$. Let $S^N$ be the set of hyperplanes in $\p^n$. Define
\begin{equation}
\label{eq:bertini}
\mu(X):=\lim_{N \to \infty}\frac{\Big|\big\{\Gamma \in S^N : \Gamma \cap X  \quad \text{is geometrically irreducible}\big\}\Big|}{\Big|\check{\p}^{n}(\F_{q^N})\Big|}
\end{equation}

where $\check{\p}^{n}$ is the dual of $\p^n$ and for every variety $Y$ defined over $F$ and field extension $E$ we write $X(E)$ for the set of all $E$-rational points of $Y$,  i.e., the set of all points in $Y$ with coordinates in $E$.

By applying Bertini's Theorem for an infinite field~\cite[Theorem~ $6.3(4)$]{Jouanolou1983} we show that $\mu(X)=1$. In other words, as $N$ approaches infinity, the intersection $X \cap \Gamma$ is geometrically irreducible, for a generic hyperplane $\Gamma$.

\begin{lemma}\label{lm:density}
Let $X$ be as above, then $\mu(X)=1$.

\end{lemma}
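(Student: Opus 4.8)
The plan is to reduce the stated limit over finite fields to an application of the classical Bertini irreducibility theorem over an infinite field, using the fact that a geometrically irreducible variety stays geometrically irreducible after any field extension, and that the "good" locus in the dual projective space $\check{\p}^n$ is a nonempty Zariski-open set defined over $\F_q$.

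First I would set up the parameter space: work in $\check{\p}^n$, which parametrizes hyperplanes, and consider the incidence correspondence $\{(x,\Gamma) : x \in X \cap \Gamma\} \subset X \times \check{\p}^n$ with its projection to $\check{\p}^n$. By the geometric irreducibility of $X$ and Bertini's theorem in the form \cite[Theorem~$6.3(4)$]{Jouanolou1983} (applied over $\overline{\F_q}$, or over the generic point), there is a dense Zariski-open subset $U \subset \check{\p}^n$, defined over $\F_q$, such that for every $\Gamma \in U$ the scheme $X \cap \Gamma$ is geometrically irreducible. Here I would note the hypothesis $\dim X \ge 1$ is implicitly needed so that the generic hyperplane section is positive-dimensional and Bertini applies; the complement $Z := \check{\p}^n \setminus U$ is a proper closed subvariety, hence of dimension at most $n-1$.

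Next I would do the point count. For each $N$, the hyperplanes over $\F_{q^N}$ with geometrically irreducible intersection with $X$ certainly include all $\F_{q^N}$-points of $U$ (geometric irreducibility is preserved under the extension $\F_{q^N} \subset \overline{\F_q}$, so a point of $U(\F_{q^N})$ gives a section $X \cap \Gamma$ that is geometrically irreducible as required). Therefore
\[
\frac{|U(\F_{q^N})|}{|\check{\p}^n(\F_{q^N})|} \le \frac{|\{\Gamma \in S^N : \Gamma \cap X \text{ geom.\ irred.}\}|}{|\check{\p}^n(\F_{q^N})|} \le 1.
\]
Since $\check{\p}^n$ is irreducible of dimension $n$, we have $|\check{\p}^n(\F_{q^N})| = q^{nN} + O(q^{(n-1)N})$, while $|Z(\F_{q^N})| = O(q^{(n-1)N})$ because $\dim Z \le n-1$ (the crude Lang--Weil or Schwartz--Zippel-type bound on points of a variety suffices here). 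Hence $|U(\F_{q^N})|/|\check{\p}^n(\F_{q^N})| \to 1$ as $N \to \infty$, and by the squeeze the middle quantity tends to $1$, i.e.\ $\mu(X) = 1$.

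The main obstacle is making precise the transfer between the geometric (scheme-theoretic, over $\overline{\F_q}$) statement of Bertini and the arithmetic counting statement: one must check that "$X \cap \Gamma$ is geometrically irreducible" for $\Gamma \in \check{\p}^n(\F_{q^N})$ is implied by $\Gamma$ lying in the good open set $U$, which requires knowing $U$ descends to $\F_q$ (equivalently, is Galois-stable) and that geometric irreducibility of the section is a condition testable after passing to $\overline{\F_q}$. Both are standard — $U$ is cut out by the nonvanishing of a discriminant-type polynomial with coefficients in $\F_q$, and irreducibility over $\overline{\F_q}$ is by definition what "geometrically irreducible" means — but they are the substantive points; the point-count estimates afterward are routine.
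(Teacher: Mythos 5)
Your proposal is correct and follows essentially the same strategy as the paper: use Jouanolou's Bertini theorem over $\overline{\F_q}$ to show the locus of bad hyperplanes is contained in a proper closed subvariety of $\check{\p}^n$, then conclude by a point count showing that locus contributes $O(q^{(n-1)N})$ points against $q^{nN}+O(q^{(n-1)N})$ in total. The only cosmetic difference is that the paper establishes closedness of the bad locus explicitly via a rational map to products of Chow varieties, whereas you take the openness of the good locus directly from the form of Bertini's theorem; your version also correctly flags the descent-to-$\F_q$ issue that the paper leaves implicit.
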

\begin{proof}
Define $m$ to be the dimension of $X$ and $d$ the degree of $X$. Let $\mathcal{H}_{d,m-1}$ be the Chow variety of cycles in $\p^n$ of dimension $m-1$ and degree $d$. Let $\Omega$ be the set of hyperplanes in $\p^n(\F_{q^N})$ whose intersection with $X$ is reducible. We know that if $\Gamma \in \Omega$ then $\dim(\Gamma \cap X)=m-1$. More precisely if $\Gamma \in \Omega$, then $\Gamma \cap X=X_1 \cup X_2$, where $X_i \in \mathcal{H}_{d_i,m_i}$  for $i=1,2$ and $\max(m_1,m_2)=m-1$ and $d_1+d_2=d$.

First we show that $\Omega$ is a closed set. To do this, consider the rational map  
\[
\Phi : S^N \dashrightarrow \mathcal{H}_{d,m-1} \quad \Gamma \mapsto \Gamma \cap X.
\]
Indeed 
\[\Omega= \bigcup_{d_1+d_2=d}\Phi^{-1}(\mathcal{H}_{d_1,m_1}\times \mathcal{H}_{d_2,m_2}).\]

Hence, $\Omega$ is an algebraic variety.

By Bertini's Theorem about irreducibility $\dim \Omega < \dim S^N=n+1$.  Hence, the probability that an element in $S^N$ is in $\Omega$ tends to $0$. More precisely, by the Lang-Weil Theorem this probability is bounded by
\[
\frac{(q^N)^{\dim \Omega}}{(q^N)^{\dim S^N}} \to 0 \quad \text{for $N \to \infty$}. \qedhere
\] 
\end{proof}

\subsection{Simple tangency and reflexivity}
\label{subsec:simple-tangency}
In this section we recall the basic properties of reflexive curves and its relation with curves with simple tangency that we will need.

Let $X$ be a variety in the projective space $\p^n$,defined over an  algebraically  closed perfect field $K$. We define the \emph{conormal variety} of $X$ as the Zariski closure of the set 
\[
\con(X) := \overline{\big\{(p,\Gamma) \in X \times \check{\p}^n: T_p(X) \subset \Gamma \big\}}.
\] 

Let $\pi_2$ be the second projection which is called the \emph{conormal map} and define\\ $X^*:=\pi_2(\con(X))$. If $\con(X)$ and $\con(X^*)$ are isomorphic by the map which flips the two entries of a pair in a product variety, then we say that $X$ is \emph{reflexive}.

It is known that if the field $K$ has zero characteristic, then every variety is reflexive; this is not true in characteristic $p>0$. The following theorem is useful for checking if a given projective variety is reflexive or not. see \cite{Wallace1956}.

\begin{theorem}[Monge-Segre-Wallace]
	\label{thm:MSW}
A projective variety $X$ is reflexive if and only if the conormal
map $\pi_2$ is separable.
\end{theorem}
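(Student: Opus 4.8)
The plan is to deduce the equivalence from two facts about the conormal variety of an irreducible projective variety: a \emph{characteristic-free} smoothness statement for the first conormal projection, and an \emph{infinitesimal} statement obtained from a local coordinate computation. Throughout write $C=\con(X)$, let $\pi_1\colon C\to X$ and $\pi_2\colon C\to X^{*}$ be the two projections, and let $\sigma\colon\p^n\times\check\p^n\to\check\p^n\times\p^n$ interchange the two factors, so that ``$X$ is reflexive'' means precisely $\sigma(C)=\con(X^{*})$. We may assume $X$ is irreducible.

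\emph{Ingredient 1: for every irreducible $Y\subset\p^n$, the first conormal projection $\con(Y)\to Y$ is separable, and $\con(Y)$ is irreducible of dimension $n-1$.} Over the smooth locus $Y_{\mathrm{sm}}$ the fibre of $\con(Y)\to Y$ over a point $y$ is the set of hyperplanes containing the linear space $T_y(Y)$, i.e.\ a linear $\p^{\,n-1-\dim Y}$; so over the dense open $Y_{\mathrm{sm}}$ the variety $\con(Y)$ is the projectivisation of the conormal bundle, in particular smooth over $Y_{\mathrm{sm}}$ with integral fibres. Hence $\con(Y)$ is the closure of an irreducible family of dimension $\dim Y+(n-1-\dim Y)=n-1$, and $\con(Y)\to Y$ is separable. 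This will be applied to $Y=X$ and, crucially, to $Y=X^{*}$.

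\emph{Ingredient 2: at a general point $(p,\xi)\in C$ one has $d\pi_2\bigl(T_{(p,\xi)}C\bigr)\subseteq T_\xi(H_p)$}, where $H_p=\{\Gamma\in\check\p^n:p\in\Gamma\}$ is the hyperplane of $\check\p^n$ dual to $p$; since $H_p$ is linear and contains $\xi$, it follows that the embedded tangent space of $X^{*}$ at $\xi$ is contained in $H_p$ whenever $\pi_2$ is smooth at $(p,\xi)$. To prove the inclusion I would choose affine coordinates $x_0,\dots,x_n$ with $p=[1:0:\dots:0]$ a smooth point of $X$ and $X$ given near $p$ by $x_i=f_i(x_1,\dots,x_m)$ for $i>m$, each $f_i$ vanishing to order at least $2$ at the origin, so that $T_p(X)=\{x_{m+1}=\dots=x_n=0\}$ and $\xi=\{\sum_{i>m}a_ix_i=0\}$. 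Parametrising, for $u$ near the origin, the hyperplanes tangent to $X$ at the point $(1,u,f_{m+1}(u),\dots,f_n(u))$ gives explicit local coordinates on $C$, and a direct differentiation of $\pi_2$ at $(p,\xi)$ shows that the row of the resulting Jacobian matrix attached to the coordinate cutting out $H_p$ is identically zero; the surviving block of the Jacobian is, up to scaling, the second fundamental form of $X$ at $p$ contracted with $\xi$. Carrying this out carefully, while keeping track of projective versus affine tangent spaces and of the Zariski closures involved, is the step I expect to be the main obstacle.

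Granting the two ingredients, the equivalence follows. $(\Leftarrow)$ Suppose $\pi_2$ is separable. Since $C$ is integral over the perfect base field and $X^{*}=\overline{\pi_2(C)}$, generic smoothness provides a dense open of $C$ over which $\pi_2$ is smooth; at a general point $(p,\xi)$ of it, with moreover $\xi$ a smooth point of $X^{*}$, Ingredient 2 gives that the embedded tangent space of $X^{*}$ at $\xi$ lies in $H_p$, which is exactly the condition $(\xi,p)\in\con(X^{*})$. Letting $(p,\xi)$ range over a dense subset of $C$ and using that $\con(X^{*})$ is closed, we get $\sigma(C)\subseteq\con(X^{*})$; both sides are irreducible of dimension $n-1$ by Ingredient 1, so $\sigma(C)=\con(X^{*})$ and $X$ is reflexive. $(\Rightarrow)$ Suppose $X$ is reflexive, so $\sigma(C)=\con(X^{*})$. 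Since $\mathrm{pr}_1\circ\sigma=\mathrm{pr}_2$ on $\p^n\times\check\p^n$, the isomorphism $\sigma\colon C\to\con(X^{*})$ carries $\pi_2=\mathrm{pr}_2|_C$ to the first conormal projection $\mathrm{pr}_1|_{\con(X^{*})}\colon\con(X^{*})\to X^{*}$, which is separable by Ingredient 1 applied to $X^{*}$. Since separability is preserved by isomorphism, $\pi_2$ is separable.
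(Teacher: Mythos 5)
The paper does not prove this statement at all: Theorem~\ref{thm:MSW} is quoted as a known result with a pointer to \cite{Wallace1956} (and the relevant modern treatment is in the Hefez--Kleiman notes \cite{Hefez1985a}), so there is no in-paper argument to compare against. Your proposal reconstructs the classical proof, essentially Kleiman's argument that $\con(X)$ is an integral subvariety of the canonical contact (corank-one) distribution on the incidence variety $\{(p,\Gamma): p\in\Gamma\}\subset \p^n\times\check\p^n$, and it is correct. Both ingredients hold: Ingredient~1 is immediate from the projective-bundle structure of $\con(Y)$ over $Y_{\mathrm{sm}}$, and the local computation you defer in Ingredient~2 does go through --- with $p=[1:0:\dots:0]$, $X$ given by $x_i=f_i(u)$, $\ord_0 f_i\ge 2$, the tangent hyperplane has $a_0=\sum_{i,j}a_i\frac{\partial f_i}{\partial u_j}(u)\,u_j-\sum_i a_i f_i(u)$, whose full differential vanishes at $u=0$, so $d\pi_2$ lands in $T_\xi(H_p)$ and the surviving block is indeed the contracted second fundamental form. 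Two points to make explicit in a final write-up: the step ``$\pi_2$ separable $\Leftrightarrow$ $\pi_2$ generically smooth'' uses that $C$ and $X^*$ are integral, $\pi_2$ is dominant by definition of $X^*$, and the base field is algebraically closed (as in the paper's setting); and in the $(\Leftarrow)$ direction you should restrict to the dense open locus of $C$ lying over $X^*_{\mathrm{sm}}$ so that ``$T_\xi(X^*)\subset H_p$'' is literally the defining condition for $(\xi,p)$ to lie in $\con(X^*)$ before passing to closures. With those caveats the dimension count $\dim\sigma(C)=\dim\con(X^*)=n-1$ closes the argument exactly as you say.
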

In \cite{Hefez1985a} the authors proved the following result, called the Generic Order of Contact Theorem:
\begin{theorem}
A projective curve $Z$ is non-reflexive if and only if for a general point $p$ of $Z$ and a general tangent hyperplane $H$ to $Z$ at $p$, we have 
\[
[K(\con(Z)):K(Z^*)]_{isep}=I(p,Z.H).
\]
Where $I(p,Z.H)$ is the intersection multiplicity of $Z$ and $H$ at $p$, and~$[K(\con(Z)):K(Z^*)]_{isep}$ is the inseparable degree extension.
\end{theorem}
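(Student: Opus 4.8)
The plan is to derive the statement from the Monge--Segre--Wallace criterion (Theorem~\ref{thm:MSW}) by an explicit local analysis of the conormal map $\pi_2\colon\con(Z)\to\check{\p}^n$ near a general point of $\con(Z)$. By that criterion $Z$ is non-reflexive exactly when $\pi_2$ is inseparable, i.e.\ when $[K(\con(Z)):K(Z^*)]_{isep}>1$. Since any tangent hyperplane $H$ at $p$ satisfies $I(p,Z.H)\ge 2$, the implication ``$\Leftarrow$'' of the theorem is immediate: the displayed equality would force the inseparable degree to exceed $1$. The content is the reverse implication, so assume from now on that $Z$ is non-reflexive; the goal is to compute $[K(\con(Z)):K(Z^*)]_{isep}$ and identify it with the generic order of contact.

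For the local picture I would fix a general --- hence smooth --- point $p\in Z$, a formal parameter $t$ at $p$, and homogeneous coordinates adapted to the osculating flag of $Z$ at $p$, so that $Z$ is parametrized near $p$ by $\gamma(t)=[1:t:\gamma_2(t):\cdots:\gamma_n(t)]$ with $\gamma_i(t)=c_it^{\epsilon_i}+(\text{higher order})$, $c_i\neq 0$, where $0=\epsilon_0<1=\epsilon_1<\epsilon_2<\cdots<\epsilon_n$ is the generic order sequence. Then $T_pZ=\{x_2=\cdots=x_n=0\}$, a general tangent hyperplane $H=\{\sum a_ix_i=0\}$ at $p$ has $a_0=a_1=0$, and from $\langle a,\gamma(t)\rangle=a_2c_2\,t^{\epsilon_2}+(\text{higher order})$ one reads off $I(p,Z.H)=\epsilon_2=:\beta$. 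Near $(p,H)$ the incidence conditions $\langle a,\gamma(t)\rangle=\langle a,\dot\gamma(t)\rangle=0$ defining $\con(Z)$ solve, after normalizing $a_n=1$, for $a_1=-\sum_{i\ge2}a_i\dot\gamma_i(t)$ and $a_0=\sum_{i\ge2}a_i\bigl(t\dot\gamma_i(t)-\gamma_i(t)\bigr)$ as functions of $(t,a_2,\dots,a_{n-1})$, giving a birational parametrization of $\con(Z)$. Writing $F:=K(a_2,\dots,a_{n-1})$, $u:=\sum_{i\ge2}a_i\dot\gamma_i(t)$ and $v:=tu-\sum_{i\ge2}a_i\gamma_i(t)$, this identifies $K(\con(Z))=F(t)$ and $K(Z^*)=F(u,v)$.

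The computational key is the identity $\dot v=t\,\dot u$, obtained by differentiating $v$ and using $\tfrac{d}{dt}\sum a_i\gamma_i=u$; it forces $\dot v=0\iff\dot u=0$. Hence $F(t)/F(u,v)$ is separable iff $\dot u=\sum_{i\ge2}a_i\ddot\gamma_i(t)\neq 0$, and since the $a_i$ are algebraically independent over $K(t)$ this fails iff every $\gamma_i$ lies in $K[[t^p]]\oplus t\,K[[t^p]]$. By Theorem~\ref{thm:MSW} non-reflexivity is thus equivalent to $\dot u=0$, which we have assumed. Now write $\sum a_i\gamma_i=P(t^p)+tQ(t^p)$; then $u=Q(t^p)$ and $v=-P(t^p)$, so $K(Z^*)\subseteq F(t^p)$. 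Iterating the same dichotomy over the parameters $t^p,t^{p^2},\dots$ --- at each stage, as soon as one of $u,v$ is not a $p$-th power the residual extension is separable by the derivative test above --- yields $[K(\con(Z)):K(Z^*)]_{isep}=p^{e}$, where $p^{e}$ is the largest power of $p$ with $u,v\in F[[t^{p^{e}}]]$.

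It then remains to prove $p^{e}=\beta$. One direction, $p^{e}\mid\beta$, is easy once one knows $p\mid\beta$: in that case $\ord_t v=\beta$, and every exponent of $v$ is divisible by $p^{e}$. The substance --- and what I expect to be the main obstacle --- is the reverse: that $\beta$ is exactly a power of $p$ and that the exponents of $\sum a_i\gamma_i$ are rigid enough modulo $\beta$ to force $p^{e}=\beta$. This should follow from the $p$-adic structure of the generic order sequence of a curve: the set $\{\epsilon_0,\dots,\epsilon_n\}$ has the property that if $a$ belongs to it and $\binom{a}{b}\not\equiv 0\pmod p$ for some $0\le b\le a$, then $b$ belongs to it; together with $\epsilon_1=1$ this forces $\beta=\epsilon_2$ to be a power of $p$ and constrains, modulo $\beta$, the exponents that can occur in $\gamma_2,\dots,\gamma_n$, hence in $\sum a_i\gamma_i$. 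Establishing this rigidity of the osculating behaviour at a general point --- where working in positive characteristic is essential --- is the delicate part; granting it, the rest is the bookkeeping sketched above.
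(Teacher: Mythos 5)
This theorem is not proved in the paper at all: it is the Generic Order of Contact Theorem of Hefez and Kleiman, quoted from \cite{Hefez1985a}, so there is no in-paper argument to compare yours against. Judged on its own, the first part of your sketch is the correct standard skeleton: the backward implication does follow at once from Theorem \ref{thm:MSW} because $I(p,Z.H)\ge 2>1$; the identity $\dot v=t\dot u$ together with the derivation test correctly characterizes inseparability of $\pi_2$; and the iteration correctly yields $[K(\con(Z)):K(Z^*)]_{isep}=p^{e}$ with $e$ maximal such that $u,v$ are functions of $t^{p^{e}}$. One formal caveat: writing $K(\con(Z))=F(t)$ tacitly assumes $K(Z)=K(t)$, i.e.\ that $Z$ is rational; for a general curve the derivation argument has to be run inside $F\cdot K(Z)$ (or its completion $F((t))$ at the general point, with $t$ a separating uniformizer). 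That is repairable, but it is not free.

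The genuine gap is the one you flag yourself: the identification $p^{e}=\epsilon_2=I(p,Z.H)$. This is not bookkeeping --- it is the entire content of the theorem. It needs two unproved inputs: (i) that in the non-reflexive case the generic contact order $\epsilon_2$ is a power of $p$, and (ii) that every exponent occurring in the expansion of $\gamma_2,\dots,\gamma_n$ at a \emph{general} point is congruent to $0$ or $1$ modulo $\epsilon_2$. The $p$-adic closure property of order sequences you invoke for (i) (if $a$ is an order and $\binom{a}{b}\not\equiv 0\pmod p$ then $b$ is an order) is itself a nontrivial theorem, and (ii) does not follow from it formally. Both are genuinely statements about general points: for $y=x^9+x^{12}$ in characteristic $3$, the expansion at the origin has contact order $9$ with its tangent while the inseparable degree of the conormal map is $3$; the asserted equality survives only because at a general point the contact order drops to $3$. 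So genericity of $p$ must enter the argument in an essential, quantitative way (via Hasse derivatives and Wronskians, as in \cite{Hefez1985a}), and your sketch stops exactly where that work begins.
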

A combination of these two theorems implies 

\begin{corollary}\label{cor:simple-tangency}
If $C$ is a geometrically irreducible reflexive curve of degree $d$ in~$\p^n$, then there exists a hyperplane $H \subset \p^n$ intersecting $C$ in $d-1$ smooth points of $C$ such that  $H$ intersects $C$ transversely at $d-2$ points and has intersection multiplicity $2$ at the remaining point.
\end{corollary}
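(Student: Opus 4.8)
The plan is to use reflexivity — via the Monge--Segre--Wallace Theorem and the Generic Order of Contact Theorem — to produce a hyperplane tangent to $C$ at a single smooth point with contact of order exactly $2$, and then a dimension count to force the intersection to be transverse at every other point. First I would reduce to $C$ non-degenerate (spanning $\p^n(\overline{\F_q})$): otherwise one runs the argument inside the linear span of $C$ and extends the resulting hyperplane to $\p^n$ arbitrarily; I also assume $n\ge 2$. Then $\con(C)$ has dimension $n-1$, and the conormal map $\pi_2\colon\con(C)\to C^*$ is generically finite — a hyperplane tangent to $C$ along a curve would contain $C$ — so $C^*$ is a hypersurface in $\check\p^n$. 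By Theorem~\ref{thm:MSW}, reflexivity of $C$ makes $\pi_2$ separable, so $[K(\con(C)):K(C^*)]_{isep}=1$; moreover $\pi_2$ is birational onto $C^*$, because by reflexivity $\con(C^*)$ is the coordinate flip of $\con(C)$, whence the fibre of $\pi_2$ over a general (hence smooth) point of the hypersurface $C^*$ is the single point dual to the tangent hyperplane of $C^*$ there. In particular the locus $B\subsetneq C^*$ of hyperplanes tangent to $C$ at two or more points is a proper closed subset.

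Next I would fix a general point $p\in C$ — automatically a smooth point with a well-defined tangent line $T_pC$ — and consider the linear family $\Lambda_p\cong\p^{n-2}$ of hyperplanes containing $T_pC$ (which is $\{T_pC\}$ itself when $n=2$), each of which is tangent to $C$ at $p$. The hyperplane I want is a general member $H\in\Lambda_p$. Since $\overline{\bigcup_p\Lambda_p}=C^*$ while $B$ is proper and closed, for general $p$ one has $\Lambda_p\not\subseteq B$, so a general $H\in\Lambda_p$ is tangent to $C$ at $p$ and at no other point; and since $T_pC$ misses the finitely many singular points of $C$ for general $p$, a general $H\in\Lambda_p$ misses them too, so the remaining points of $C\cap H$ are smooth points of $C$. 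The one place the hypothesis is genuinely used is the equality $I(p,C.H)=2$: by the Generic Order of Contact Theorem the generic order of contact of a tangent hyperplane to $C$ equals $\max\bigl(2,\ [K(\con(C)):K(C^*)]_{isep}\bigr)$, which is $2$ because $\pi_2$ is separable — equivalently, the second osculating space $O_2(p)$ at the general point $p$ is $2$-dimensional, and a general $H\in\Lambda_p$ does not contain it (a codimension-one condition inside $\Lambda_p$, vacuously satisfied when $n=2$), so $H$ has contact exactly $2$ with $C$ at $p$.

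To conclude, $C$ is irreducible of positive dimension and, being non-degenerate, is not contained in $H$, so B\'ezout gives $d=\sum_{x\in C\cap H}I(x,C.H)$. The unique tangency point $p$ contributes $I(p,C.H)=2$; every other point $x\in C\cap H$ is a smooth point of $C$ at which $H$ is not tangent, hence $T_xC\not\subseteq H$ and $I(x,C.H)=1$. Therefore $C\cap H$ consists of $p$ together with exactly $d-2$ further, distinct, smooth points at which $H$ meets $C$ transversally, which is precisely the asserted configuration. The main obstacle is the step giving $I(p,C.H)=2$: one must know that reflexivity pins the generic order of contact to the minimal value $2$, rather than merely to something $\ge 2$ — this is where the cited theorems do the real work. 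Everything surrounding it is a routine general-position argument, with the small-$n$ cases ($n=2$, where $\Lambda_p$ degenerates to the single line $T_pC$) handled by the fact that a reflexive plane curve has only finitely many flex-tangent and bitangent lines.
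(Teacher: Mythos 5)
Your proposal is correct and follows the same route the paper intends: the paper offers no written proof beyond the remark that the corollary follows from combining Theorem~\ref{thm:MSW} with the Generic Order of Contact Theorem, and your argument is precisely the careful fleshing-out of that combination (separability of $\pi_2$ forcing generic contact order exactly $2$, birationality of the conormal map from biduality to exclude multiple tangency, general position to avoid singular points, and B\'ezout to count the remaining $d-2$ transverse points). You are also right to flag that the key input is the sharp form of the Hefez--Kleiman result pinning the generic contact order to $2$ rather than merely $\ge 2$; the paper's quoted statement of that theorem is weaker than what is actually needed, and your writeup makes the dependence explicit.
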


\begin{proof}
Since $C$ has simple tangency, $\pi_2$ is separable, hence for all $p \in C$ except finitely many of them the tangent line at $p$ has multiplicity at most~$2$~\cite[Proposition $1.5\, (a)$]{Pardini1986} at $p$. On the other hand, separability of $\pi_2$ implies there are at most finitely many lines tangent to $C$ at more than one points~\cite[Proposition $1.5 \, (b)$]{Pardini1986}. This completes the proof. 
\end{proof}

\newpage
\begin{definition}~
	\label{def:simple-tangency}
\begin{itemize}
	\item A curve $C$ having the property discussed in Corollary \ref{cor:simple-tangency} is said to have \emph{simple tangency}.
		
	\item Let $X$ be a geometrically irreducible variety in $\p^n$ of dimension~$m$. We say that $X$ has the \emph{simple tangency property} if there exist a linear subspace $\Gamma \in J_{m-1}$ such that the curve $ X \cap \Gamma$ has simple tangency.
\end{itemize}

\end{definition}

\begin{lemma}\label{lm:surface-simple-tangency}
Suppose that $X$ is a geometrically irreducible variety  of degree $d$ and dimension $m$ in $\p^n$ with simple tangency property. Then for a general linear subspace $\Gamma \in J_{m-1}$ the intersection $X \cap \Gamma$ is a curve with simple tangency.
\end{lemma}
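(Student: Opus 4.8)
The plan is to show that the set of linear subspaces $\Gamma \in J_{m-1}$ for which $X \cap \Gamma$ is a geometrically irreducible curve with simple tangency is open and nonempty. By hypothesis (Definition~\ref{def:simple-tangency}), there exists at least one $\Gamma_0 \in J_{m-1}$ with $X \cap \Gamma_0$ a curve having simple tangency, so nonemptiness is free; the work is to produce an open condition.

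First I would handle dimension and irreducibility: by iterating Bertini's irreducibility theorem over the algebraically closed field (the same tool invoked in Lemma~\ref{lm:density}, \cite[Theorem 6.3(4)]{Jouanolou1983}), the locus of $\Gamma \in J_{m-1}$ for which $X \cap \Gamma$ fails to be a geometrically irreducible curve of degree $d$ is a proper closed subset; cutting by $m-1$ general hyperplanes in succession drops the dimension by one each time and preserves geometric irreducibility on a dense open set. So on a dense open $U_1 \subset J_{m-1}$, $X \cap \Gamma$ is a geometrically irreducible curve of degree $d$.

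Next I would argue that reflexivity is an open condition on $U_1$. Consider the incidence family $\mathcal{C} \to J_{m-1}$ with fibre $X \cap \Gamma$, and the relative conormal variety $\con(\mathcal{C}/J_{m-1})$ with its conormal map to $J_{m-1}$. By the Monge--Segre--Wallace criterion (Theorem~\ref{thm:MSW}), reflexivity of a fibre is equivalent to separability of the conormal map restricted to that fibre; separability of a morphism is an open condition on the base (the non-separable locus is closed, as it is detected by the rank of a differential or equivalently by a resultant/discriminant vanishing). Since $X \cap \Gamma_0$ has simple tangency it is reflexive by Corollary~\ref{cor:simple-tangency}'s converse reasoning — more precisely a curve with simple tangency has a hyperplane section with a single honest node of contact order $2$, which forces the generic order of contact to be $2$ and hence, by the Generic Order of Contact Theorem combined with Theorem~\ref{thm:MSW}, reflexivity. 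Thus $U_1 \cap \{\Gamma : X\cap\Gamma \text{ reflexive}\}$ contains $\Gamma_0$ and is open, hence dense; call it $U_2$. Finally, for every $\Gamma \in U_2$, Corollary~\ref{cor:simple-tangency} applies directly to the geometrically irreducible reflexive curve $X \cap \Gamma$ of degree $d$ and yields that $X \cap \Gamma$ has simple tangency.

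The main obstacle I anticipate is the openness of reflexivity in the family: one must set up the relative conormal variety carefully and verify that the fibrewise conormal map varies well enough that separability is genuinely an open condition — in particular one should check that the generic fibre over $U_1$ has the same dimension of conormal variety as the special fibre $X \cap \Gamma_0$, so that the degree-of-contact computation is uniform. An alternative that sidesteps the relative conormal construction is to characterize simple tangency of $X \cap \Gamma$ directly: it holds iff a certain dual-variety incidence condition (existence of a tangent hyperplane with exactly one contact point of multiplicity $2$ and the rest transverse) is satisfied, and this is visibly a constructible condition that is nonempty by hypothesis; one then argues it is open by a semicontinuity argument on intersection multiplicities. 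Either route reduces the lemma to a standard "nonempty $+$ open $\Rightarrow$ dense" argument on the Grassmannian.
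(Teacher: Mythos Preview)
Your proposal is correct and follows essentially the same route as the paper: both arguments reduce the lemma to showing that non-reflexivity of the curve $X\cap\Gamma$ is a closed condition on $\Gamma$, detected by the identical vanishing of the Jacobian of the conormal map, and then invoke Corollary~\ref{cor:simple-tangency} on the open complement. The only cosmetic difference is that the paper factors through the Chow variety $\mathcal{H}_{d,1}$ (proving the locus $\mathcal{H}_{d,1}'$ of curves without simple tangency is closed there and pulling back along $\Gamma\mapsto X\cap\Gamma$), whereas you work directly with the family over $J_{m-1}$ and separate out the Bertini irreducibility step explicitly; your version is in fact slightly more careful on that last point, since Corollary~\ref{cor:simple-tangency} requires geometric irreducibility of the curve.
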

\begin{proof}
Let $\mathcal{H}_{d,m}$ be the Chow variety. Let $\mathcal{H}_{d,1}^{\prime}$ be the set of all curves in $\p^n(K)$ of degree~$d$ and without simple tangency. Define the rational map
\[
\Phi_X \colon J_{m-1} \dashrightarrow \mathcal{H}_{d,1}, \qquad \Gamma \mapsto X \cap \Gamma.
\]
Notice that $\Phi_X$ in general is not a morphism but we can consider the restriction of $\Phi_X$ to the set $\dom(\Phi_X)$ where $\Phi_X$ is defined to get a morphism. For a fixed $X \in \mathcal{H}_{d,m}$, define
\[
\Omega_X:=\Big\{\Gamma \in J_{m-1}: \quad X \cap \Gamma \quad \text{does not have simple tangency} \Big\}.
\]

By the definition we have  $\Omega_X \subset \Phi^{-1}(\mathcal{H}_{d,1}^{\prime})$. Since $X$ is a variety with simple tangency property there exists a linear subspace~$\Gamma \in J_{m-1}$ such that $\Gamma \cap X$ is a curve with simple tangency, hence $\Phi^{-1}(\mathcal{H}_{d,1}^{\prime})$ is a proper set. We need to only show that $\Phi^{-1}(\mathcal{H}_{d,1}^{\prime})$ is a closed set. The proof of the lemma is a consequence of the following claim.

\textbf{Claim}. $\mathcal{H}_{d,1}^{\prime}$ is a closed set in $\mathcal{H}_{d,1}$.

\textit{Proof of the claim.}
If a curve $C$ is in $\mathcal{H}_{d,1}^{\prime}$, then its  conormal map is not separable. This is the case if and only if the Jacobian of the conormal map vanishes identically. This can be expressed as algebraic equations in the curve, hence the set $\mathcal{H}_{d,1}^{\prime}$ is closed.
\end{proof}
Let us first formulate the definition of the probabilities that we want to compute. 
\begin{definition}[Probabilities of intersection]
	\label{def:density-of-variety}
	Let $q$ be a prime power and let $X$ be a geometrically irreducible variety of dimension $m$ and degree~$d$ defined over~$\F_q$. For every $N \in \N$ and for every $k \in \{0, \dotsc, d\}$, the \emph{$k$-th probability of intersection} $p_k^N(X)$ of varieties of codimension $m$ with~$X$ over~$\F_{q^N}$ is  
	\[
	p_k^N(X) := 
	\frac{\Bigl| 
		\bigl\{
		V \in J_m  \, : \, 
		|X(\F_{q^N}) \cap V(\F_{q^N}) | = k 
		\bigr\} 
		\Bigr|}{|J_m(\F_{q^N})|} \,.
	\]
Moreover, we define $p_k(X):= \lim_{N \to \infty} p_k^N(X)$.
\end{definition}

%
%

\textbf{Proof of Theorem \ref{main}}.
 Define 
\[
I:=\big\{(V,W)\in J_m \times J_{m-1} : V \subset W\big\}.
\]
From Definition \ref{def:density-of-variety}, we have
\[
p_k^N(X)=\frac{\big|\big\{V \in J_m: |X(\F_{q^N}) \cap V(\F_{q^N})|=k\big\}\big|}{|J_m(\F_{q^N})|}=\frac{\big| \big\{(V,W) \in I: |X(\F_{q^N})\cap V(\F_{q^N})|=k\big\}\big|}{|I|}.
\]

We can write 
\begin{equation}\label{eq:density-1}
p_k^N(X)=\sum_{W\in J_{m-1}} \frac{\big| \big\{(V,W)\in I : |V\cap X\cap W|=k\big\}\big|}{|I|}.
\end{equation}
Where for a generic $W$ the intersection $X \cap W$ is a geometrically irreducible curve by Lemma \ref{lm:density}.
 Since any two linear subspaces have the same number of points over a finite field we can write:

\begin{equation}\label{eq:density-2}
p_k^N(X)=\sum_{W\in J_{m-1}} \frac{\big|\big\{V \subset W: |V\cap X \cap W|=k \big\} \big|}{\big|\big\{V: V \subset W \big\}\big|} \Bigg/ \frac{|I|}{\big|\big\{V: V\subset W_0 \big\}\big|}
\end{equation}

where $W_0$ is a fixed element in $J_{m-1}$. However, the denominator of Equation~(\ref{eq:density-2}) is $|J_{m-1}|$. Let us now write $J_{m-1}=A \sqcup B$, where
\[
A=\big\{W \in J_{m-1}: X \cap W \quad \text{is irreducible and has simple tangency}\big\},
\]
and 
\[
B=\big\{W \in J_{m-1}: X \cap W \quad \text{is reducible or without simple tangency} \big\}.
\]
From these and Equation~(\ref{eq:density-2}) we obtain
\[
p_k^N(X)=\frac{\sum_{W\in A} p_k^N(X\cap W)+\sum_{W\in B}\delta}{|J_{m-1}|},
\]
where $\delta$ is a number in interval $[0,1]$ that may depend on $W$. Note that when we write $p_k^N(X \cap W)$, we consider $X \cap W$ in the ambient space $W$ (not $\p^n$). Hence 
\[
p_k(X)= \lim_{N \to \infty}\frac{p_k^N(X\cap W_0)|A|+\delta|B|}{|J_{m-1}|}
\]
by the result in  \cite[Proposition $5.2$]{Makhul2018}. By Lemma \ref{lm:surface-simple-tangency} we know that $\frac{|B|}{|J_{m-1}|}\to 0$, when $N \to \infty$. This implies 
\[
p_k(X\cap W_0)=\lim_{N \to \infty}\frac{p_k^N(X \cap W_0)|A|}{|J_{m-1}|}
\]
and again by \cite[Proposition $5.2$]{Makhul2018} we obtain the result. $\square$

\vspace{5mm}
It is natural to consider the probabilities of intersection of a variety $X$ of degree $d$ and dimension $m$ in $\p^n$ with a random variety~$Y$ of 
degree~$e$ and codimension $m$ in~$\p^n$. If $X$ is a hypersurface and $Y$ is a curve, via the \emph{Veronese map} we can reduce this situation to the one of \cite[Proposition $5.2$]{Makhul2018}. This motivates us to pose the following conjecture.

\begin{conjecture}
Let $X$ be a geometrically irreducible variety of dimension $m$ and degree $d$ in $\p^n(\F_q)$, where $q$ is a prime power. Suppose that $X$ has the simple tangency property. Let $e \in \N$ be a natural number. Then for every $k \in \{0,1,\dots ek\}$ the probability that a random irreducible variety of degree $e$ and codimension $m$ intersects $X$ in exactly $k$ points is given by

\[
p_k(X,e) = \sum_{s=k}^{de} \frac{(-1)^{k+s}}{s!} \binom{s}{k}.
\]

\end{conjecture}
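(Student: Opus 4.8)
The plan is to reduce the conjecture to the curve case \cite[Proposition~$5.2$]{Makhul2018} by inserting an $e$-uple Veronese re-embedding into the incidence argument behind Theorem \ref{main}. Recall that the $e$-th Veronese map $v_e\colon\p^{r}\hookrightarrow\p^{M}$, with $M=\binom{r+e}{e}-1$, is a closed embedding whose hyperplanes pull back to the degree-$e$ hypersurfaces of $\p^{r}$, sends a geometrically irreducible curve of degree $d$ to a geometrically irreducible curve of degree $de$, and (in characteristic zero) preserves reflexivity. Hence, if $Y=L\cap S$ for a codimension-$(m-1)$ linear subspace $L$ and a hypersurface $S$ of degree $e$, then $|X(\F_{q^{N}})\cap Y(\F_{q^{N}})|=|v_e(X\cap L)(\F_{q^{N}})\cap H(\F_{q^{N}})|$ for a hyperplane $H\subset\p^{M}$ determined by $S$, and a uniform choice of $(L,S)$ makes $H$ uniform over $\check{\p}^{M}(\F_{q^{N}})$.

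First I would prove the formula for the subfamily $\mathcal{Y}_e$ of varieties $Y=L\cap S$ with $L\in J_{m-1}$ and $S$ cutting out a geometrically irreducible degree-$e$ hypersurface inside $L\cong\p^{n-m+1}$. Copying the proof of Theorem \ref{main}, one forms the incidence variety $\{(Y,L):Y\subset L\}$, conditions on $L$, and applies Lemma \ref{lm:surface-simple-tangency} to reduce to those $L$ for which $C:=X\cap L$ is a curve with simple tangency. For such $L$ the curve $v_e(C)$ is geometrically irreducible of degree $de$, and --- assuming, as is automatic in characteristic zero and an open condition in general, that $v_e(C)$ is again reflexive --- it has simple tangency as well. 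Then \cite[Proposition~$5.2$]{Makhul2018} applied to $v_e(C)$ and a uniform hyperplane gives the limiting probability $\sum_{s=k}^{de}\frac{(-1)^{k+s}}{s!}\binom{s}{k}$, and averaging over $L$ while discarding the proper closed bad locus by Lang--Weil, exactly as in Lemma \ref{lm:density}, yields the formula over $\mathcal{Y}_e$.

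To pass from $\mathcal{Y}_e$ to the full Chow variety $\mathcal{C}_e$ of geometrically irreducible codimension-$m$ degree-$e$ subvarieties, I would argue by monodromy. Over the dense open locus $U\subset\mathcal{C}_e$ where $X\cap Y$ consists of $de$ distinct reduced points, the assignment $Y\mapsto X\cap Y$ is an \'etale cover of degree $de$ with some monodromy group $G\le S_{de}$; restricting this cover to $\mathcal{Y}_e$ gives a cover whose monodromy group is a subgroup of $G$. But the preceding paragraph --- through \cite[Proposition~$5.2$]{Makhul2018}, which is itself governed by the geometric monodromy of the hyperplane sections of a curve with simple tangency being the \emph{full} symmetric group --- shows that this subgroup is already all of $S_{de}$, forcing $G=S_{de}$. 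Then Lang--Weil equidistribution of the Frobenius conjugacy class in $G=S_{de}$, as $Y$ runs over $\mathcal{C}_e(\F_{q^{N}})$, identifies $|X(\F_{q^{N}})\cap Y(\F_{q^{N}})|$ with the number of fixed points of a uniformly random permutation of $\{1,\dots,de\}$, whose distribution is precisely $\sum_{s=k}^{de}\frac{(-1)^{k+s}}{s!}\binom{s}{k}$.

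The genuine obstacle, I expect, is making the monodromy step rigorous rather than the point-counting. One first needs the right Bertini-type irreducibility over $\mathcal{C}_e$: geometric irreducibility of the relevant component of $\mathcal{C}_e$ (which the conjecture should make precise, since Chow varieties are typically reducible), density of $U$, and connectedness of the universal intersection $\{(Y,p):p\in X\cap Y\}$. This is the analogue of Lemma \ref{lm:density}, now over a much larger and far less homogeneous base, and I would try to extract it from \cite{Charles2016,Jouanolou1983}. Secondly, one must verify that $\mathcal{Y}_e$ lies in the closure of that component and meets $U$, so that the monodromy comparison is legitimate --- delicate, since the members of $\mathcal{Y}_e$ are degenerate subvarieties. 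Thirdly, in positive characteristic the Veronese image $v_e(C)$ may fail to be reflexive, so an auxiliary hypothesis on $X$, or a direct verification that the conormal Jacobian of $v_e(C)$ does not vanish identically (cf. Theorem \ref{thm:MSW} and the Claim in Lemma \ref{lm:surface-simple-tangency}), would be needed to keep simple tangency available throughout.
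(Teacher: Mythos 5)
This statement is left as a \emph{conjecture} in the paper: the authors offer no proof, only the remark that when $X$ is a hypersurface and $Y$ is a curve the Veronese map reduces the question to \cite[Proposition~5.2]{Makhul2018}, together with the explicit warning that the relevant Chow variety has several components of maximal dimension. So there is no argument in the paper to compare yours against; your proposal has to stand on its own. Its first half --- the computation over the complete-intersection family $\mathcal{Y}_e=\{L\cap S\}$ --- is a reasonable elaboration of the authors' own heuristic, and one gap you flag there is in fact fillable: simple tangency of $C=X\cap L$ does pass to $v_e(C)$, because one may realize a suitable tangent hyperplane of $v_e(C)$ by taking $S$ to be the union of one simply tangent and $e-1$ transverse hyperplanes in $L$.

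The step that genuinely fails is the passage from $\mathcal{Y}_e$ to the full parameter space. The members of $\mathcal{Y}_e$ are exactly the degenerate $Y$ contained in some codimension-$(m-1)$ linear subspace, and in general $\mathcal{Y}_e$ misses entire maximal-dimensional components of the locus of irreducible degree-$e$ cycles; your restriction argument (``the monodromy over $\mathcal{Y}_e$ is a subgroup of $G$, hence $G=S_{de}$'') only constrains the one component that actually contains $\mathcal{Y}_e$. Concretely, take $n=3$, $m=2$, $e=3$: the irreducible degree-$3$ curves in $\p^3$ include the plane cubics, a $3+9=12$-dimensional family which is precisely $\mathcal{Y}_3$, and the twisted cubics, a $15-3=12$-dimensional $\mathrm{PGL}_4$-orbit disjoint from $\mathcal{Y}_3$. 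Both components carry equal weight in any Lang--Weil density over the Chow variety, and your argument gives no information about the monodromy of $Y\mapsto X\cap Y$ over the twisted-cubic component, so the Frobenius-equidistribution step cannot be run there. To rescue the strategy you would need a Bertini-irreducibility and full-monodromy statement on \emph{every} maximal-dimensional component separately (or a degeneration argument showing each such component contains members of $\mathcal{Y}_e$ in its closure together with a semicontinuity principle for monodromy --- which does not follow from closure alone). The further issues you raise --- reflexivity of $v_e(C)$ in positive characteristic, and the fact that the conjecture never specifies the parameter space or how to weight its components --- are real, but they are secondary to this one; indeed the second of them is exactly the ambiguity the authors point to when explaining why the statement is only a conjecture.
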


If we removed the word "irreducible" in the conclusion above, then the conjecture would be false:
the set of varieties in $\p^n$ of degree $d$ and codimension $m$ is bijective to the set of points in a Zariski-dense and open set of a Chow variety. The Chow variety has several components of maximal dimension; the smallest case for which this happens is $m=e=2$ and $n=3$. Here there is an 8-dimensional set of irreducible conics and an 8-dimensional set of reducible conics (pairs of lines). One can show that the probability that an irreducible conic intersect $X$ in $k$ points is as stated in the conjecture, but for the reducible conics, the probabilities differ. The total probabilities would be the arithmetic means of both, which would then also differ from the statement above.

\bigskip
\noindent {\Large \textbf{Acknowledgements}}. We are grateful to Herwig Hauser, who informed us about theorem of Bertini. Also we would like to thank Jose Capco for reading first draft and improving it and Matteo Gallet for helpful discussions.  

\providecommand{\bysame}{\leavevmode\hbox to3em{\hrulefill}\thinspace}
\providecommand{\MR}{\relax\ifhmode\unskip\space\fi MR }
\providecommand{\MRhref}[2]{%
	\href{http://www.ams.org/mathscinet-getitem?mr=#1}{#2}
}
\providecommand{\href}[2]{#2}

\end{document}